\newcommand{\bbC}{\mathbb{C}}
\newcommand{\bbZ}{\mathbb{Z}}
\newcommand{\bbR}{\mathbb{R}}
\newcommand{\bbA}{\mathbb{A}}
\newcommand{\bbF}{\mathbb{F}}
\newcommand{\bbL}{\mathbb{L}}
\newcommand{\Spec}{\operatorname{Spec}\,}
\def\Groth#1{K_0(\mathrm{Var}_{#1})}
\newtheorem{thm}{Theorem}
\newtheorem{cor}[thm]{Corollary}
\newtheorem{prop}[thm]{Proposition}
\newtheorem{lem}[thm]{Lemma}
\newtheorem*{claim*}{Claim}
\begin{document}

\title{Positive Motivic Measures are Counting Measures}

\author{Jordan S. Ellenberg}
\address{Department of Mathematics,
University of Wisconsin,
480 Lincoln Dr.,
Madison, WI,
53706, 
U.S.A}

\author{Michael Larsen}
\address{Michael Larsen,
Department of Mathematics,
Indiana University,
Bloomington, IN
47405,
U.S.A.}

\maketitle

Let $K$ be a field.  By a \emph{$K$-variety}, we mean a geometrically reduced, separated scheme of finite type over $K$.  Let $\Groth K$ denote the Grothendieck group of $K$, i.e., the free abelian group generated by isomorphism classes $[V]$ of $K$-varieties, with the scissors relations 
$[V] = [W] - [V\setminus W]$ whenever $W$ is a closed $K$-subvariety of $V$.  There is a unique product on $\Groth K$ characterized by the relation
$$[V]\cdot [W] = [V\times W],$$
where $\times$ denotes fiber product over $\Spec K$. 
For every extension $L$ of $K$, extension of scalars gives a natural ring homomorphism $\Groth K\to\Groth L$.  The map $K\mapsto \Groth K$ can be regarded as a functor from fields to commutative rings.  Throughout the paper, we follow the usual convention of writing 
$\bbL$ for $[\bbA^1_K]$.

A ring homomorphism from $\Groth K$ to a field $F$ is called a \emph{motivic measure}.
See, e.g., \cite{Hales,Looijenga} for general information on motivic measures.
If $K$ is a finite field, the map $[V]\mapsto |V(K)|$ extends to a homomorphism 
$\mu_K\colon \Groth K\to \bbZ$, 
and therefore to an $F$-valued measure for any field $F$.  More generally, if $L$ is an extension of $K$
which is also a finite field, the composition of $\mu_L$ with the natural map $\Groth K\to \Groth L$ gives for each $F$ a motivic measure.  We will call all such measures \emph{counting measures}.

In this paper, we consider \emph{positive} motivic measures, by which we mean
$\bbR$-valued measures $\mu$ such that $\mu([V])\ge 0$ for all $K$-varieties $V$.  Our main result is the following:

\begin{thm}
\label{main}
Every positive motivic measure is a counting measure.  In other words, if $\mu\colon \Groth K\to \bbR$ is positive, there exists a finite field $L$ containing $K$ such that $\mu([V]) = |V(L)|$ for all $K$-varieties 
$V$.
\end{thm}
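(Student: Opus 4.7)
My plan proceeds in four stages, each using positivity on progressively richer classes.

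First, $K$ must be finite: for any finite subset $T$ of $K$-rational points of $\bbA^1_K$, the complement $\bbA^1_K \setminus T$ is a $K$-variety with class $\bbL - |T|$, so positivity yields $\mu(\bbL) \geq |T|$; an infinite $K$ would force $\mu(\bbL) = \infty$, absurd. Write $K = \bbF_r$ and $q := \mu(\bbL)$. Next, I show $\mu([V]) \in \bbZ_{\geq 0}$ for every $K$-variety $V$ by exploiting configuration spaces: stratifying $V^n$ by the partition of indices indicating coincident coordinates gives $[V^n] = \sum_k S(n,k)[\mathrm{Conf}_k(V)]$ (Stirling numbers of the second kind), which inverts via Stirling numbers of the first kind to the identity
\[
[\mathrm{Conf}_n(V)] = [V]([V]-1)([V]-2)\cdots([V]-n+1).
\]
Positivity then gives $\mu([V])(\mu([V])-1)\cdots(\mu([V])-n+1) \geq 0$ for all $n \geq 1$, possible only for non-negative integer values. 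In particular $q \in \bbZ_{\geq 0}$ and $q \geq r$.

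Next I identify the candidate counting field $\bbF_{r^m}$. For any finite Galois extension $L/K$ of degree $n$, the isomorphism $L \otimes_K L \cong L^n$ yields $[\Spec L]^2 = n\,[\Spec L]$, so $\mu([\Spec L]) \in \{0, n\}$. Let $S = \{n \geq 1 : \mu([\Spec \bbF_{r^n}]) = n\}$. From $\bbF_{r^a} \otimes_K \bbF_{r^b} \cong \bbF_{r^{\mathrm{lcm}(a,b)}}^{\gcd(a,b)}$ one deduces $S$ is closed under divisor-taking and LCMs and contains $1$. Summing $\mu$ over closed points of bounded degree in $\bbA^1_K$ gives $\sum_{n \in S} n|P_n| \leq q$, where $|P_n| \geq 1$ counts degree-$n$ closed points of $\bbA^1_K$; hence $S$ is finite. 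Therefore $S = \{d : d \mid m\}$ for a unique $m \geq 1$.

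The decisive step is Stage 4: $\mu = \mu_{\bbF_{r^m}}$. This reduces to showing $\mu([W]) = 0$ whenever $W$ is a $K$-variety with $W(\bbF_{r^m}) = \emptyset$. Granted this, any $V$ decomposes as $V = V^{\mathrm{Frob}^m} \sqcup (V \setminus V^{\mathrm{Frob}^m})$; the first summand is zero-dimensional with $\mu$-value $|V(\bbF_{r^m})|$ (by Stage 3 applied to its residue fields) and the second has no $\bbF_{r^m}$-rational points, so $\mu([V]) = |V(\bbF_{r^m})|$. Taking $V = \bbA^1$ also forces $q = r^m$. The zero-dimensional case of the key claim is immediate: each factor $\Spec L_i$ has $[L_i : K] \nmid m$, so $\mu = 0$.

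The positive-dimensional case is the main obstacle, and will require structural input beyond configuration-space positivity (which only gives $\mu([W]) \in \bbZ_{\geq 0}$). A clean route might stratify $W$ by residue-field degree of closed points, showing $[W]$ decomposes (in $\Groth K$, modulo previously established relations) into a sum of classes with known vanishing $\mu$-value. The illustrative test case $W = \bbA^1 \setminus \{x : x^{r^m} = x\}$ of class $\bbL - r^m$ is revealing: forcing $\mu([W]) = 0$ here is equivalent to $q = r^m$, and likely requires exploiting the Artin--Schreier cover $x \mapsto x^{r^m} - x$ of $\bbA^1$ by itself, or leveraging the rationality of Kapranov's motivic zeta function on smooth projective curves to constrain $\mu$ via functional-equation-type relations.
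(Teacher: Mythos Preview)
Your Stages 1--3 are correct and take a genuinely different route from the paper's. The configuration-space identity $[\mathrm{Conf}_n(V)]=[V]([V]-1)\cdots([V]-n+1)$ is a clean way to force $\mu([V])\in\bbZ_{\ge 0}$ for every variety at once; the paper instead builds specific test varieties $\Omega^n\subset\bbA^n$ (the complement of all $\bbF_r$-rational affine-linear subspaces) and computes $[\Omega^n]=(\bbL-r)(\bbL-r^2)\cdots(\bbL-r^n)$, which yields the sharper conclusion $\mu(\bbL)\in\{r,r^2,r^3,\ldots\}$ directly. Your tensor-product analysis of $S$ is also correct and parallels the paper's lemma on $\mu(\Spec\bbF_{r^d})$; note incidentally that your bound $\sum_{n\in S}n|P_n|\le q$ already gives $q\ge r^m$, since $\sum_{d\mid m}d|P_d|=|\bbA^1(\bbF_{r^m})|=r^m$.

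But Stage~4 is not a proof: you explicitly leave the positive-dimensional case of your key claim open, and your derivation of $q=r^m$ is circular (it presupposes that claim applied to $V=\bbA^1$). Nothing you have established excludes, for instance, $\mu(\bbL)=17$ over $K=\bbF_2$ with $S=\{1\}$: configuration-space positivity cannot rule this out, since $17(17-1)\cdots(17-n+1)\ge 0$ for all $n$. Your speculative suggestions (Artin--Schreier covers, Kapranov zeta) do not obviously produce the missing inequality $q\le r^m$.

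The paper closes exactly this gap with two ideas you are missing. First, the $\Omega^n$ computation pins down $\mu(\bbL)=r^n$ for a specific $n$. Second --- and this is the crux --- one must show $n\in S$, i.e.\ that $\mu(\Spec\bbF_{r^n})=n$ rather than $0$. The paper argues by contradiction: if $\mu(\Spec\bbF_{r^i})=0$ for all $i\ge n$, then the complement $X_k\subset\bbA^1$ of all points of degree dividing $k$ satisfies $\mu([X_k])\ge 2$ for every $k$; one then embeds more than $r^{2n}$ pairwise disjoint copies of such an $X_k$ into $\bbA^2$ as graphs $y=P(x)$ with $\deg P\le 2n$ (distinct graphs meet only where $x$ has degree $\le 2n$), forcing $\mu([\bbA^2])>r^{2n}$, which is absurd. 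Once $n\in S$, your divisor/LCM argument gives $m=n$, and the endgame is the squeeze $\mu([V])\ge|V(\bbF_{r^n})|$ (strip off the rational points) combined with $\mu([V])+\mu([\bbA^N\setminus V])=r^{nN}$, which avoids your unproved ``$\mu([W])=0$'' claim entirely.
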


Of course, for other choices of $F$ there may still be motivic measures such that $\mu([V])$ lies in some interesting semiring of $F$ for all $K$-varieties $V$.  For example, if $F$ is $\bbC(u,v)$ and $K=\bbC$, the measure sending $V$ to its Hodge-Deligne polynomial takes values in the semiring of polynomials in $u,v$ whose term of highest total degree is a positive multiple of a power of $uv$.  

We begin with a direct proof of the following obvious corollary of Theorem~\ref{main}.

\begin{prop}
If $K$ is infinite, there are no positive motivic measures on $\Groth K$.
\end{prop}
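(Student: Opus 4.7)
The plan is to exploit the abundance of $K$-rational points on the affine line to force $\mu(\bbL)$ to exceed every positive integer, which is absurd for a real number.

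More concretely, I would start by fixing $n$ distinct $K$-rational points $p_1,\dots,p_n$ of $\bbA^1_K$, which is possible for every $n\ge 1$ precisely because $K$ is infinite. Each $\{p_i\}$ is isomorphic to $\Spec K$, so their disjoint union $W_n = \{p_1,\dots,p_n\}$ is a closed $K$-subvariety of $\bbA^1_K$ with $[W_n] = n$ in $\Groth K$.

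Applying the scissors relation to $W_n \subset \bbA^1_K$ yields
\[
[\bbA^1_K \setminus W_n] = \bbL - n
\]
in $\Groth K$. Since $\bbA^1_K\setminus W_n$ is a $K$-variety and $\mu$ is positive, we obtain
\[
\mu(\bbL) - n = \mu\bigl([\bbA^1_K\setminus W_n]\bigr) \ge 0,
\]
so $\mu(\bbL)\ge n$ for every positive integer $n$. This contradicts the fact that $\mu(\bbL)$ is a fixed real number, completing the proof.

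There is no real obstacle here: the entire argument rests on the trivial but crucial fact that an infinite field supplies arbitrarily many rational points on $\bbA^1$, and on the scissors relation applied to a finite closed subscheme. The only thing to verify cleanly is the identification $[W_n]=n$, which is immediate because each $p_i$ is $K$-rational.
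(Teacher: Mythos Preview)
Your proof is correct and essentially identical to the paper's own argument: both remove a finite set of $K$-rational points from $\bbA^1_K$, apply positivity to the complement, and conclude $\mu(\bbL)\ge n$ for every $n$. The only difference is notational---the paper writes $S$ for an arbitrary finite subset of $K$ rather than fixing $n$ points explicitly.
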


\begin{proof}
Let $\mu$ be such a measure.  For any finite subset $S$ of $K$, which we regard as a zero-dimensional subvariety of $\bbA^1$,
$$0\le \mu(\bbA^1\setminus  S) = \mu(\bbL) - |S|.$$
Thus, $\mu(\bbL) \ge |S|$ for all subsets $S$ of $K$, which proves the proposition.
\end{proof}

For the remainder of the paper we may and do assume that $K$ is finite, of cardinality $q$.
We write $\bbF_{q^n}$ for the degree $n$ extension of $K$.

\begin{prop}
Let $\Omega^n$ denote the variety obtained from $\bbA^n$ by removing all proper affine-linear subspaces defined over $\bbF_q$.  Then
$$[\Omega^n] = (\bbL-q)(\bbL-q^2)\cdots(\bbL-q^n).$$
\end{prop}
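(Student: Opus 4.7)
The plan is to proceed by induction on $n$, using a projection-and-scissors argument.

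First I would reinterpret $\Omega^n$: a point $(x_1,\ldots,x_n)\in\bbA^n(\bar K)$ lies in $\Omega^n$ if and only if $1,x_1,\ldots,x_n$ are linearly independent over $\bbF_q$. Indeed, every proper affine-linear subspace defined over $\bbF_q$ lies in some affine hyperplane $\{a_0+a_1 x_1+\cdots+a_n x_n=0\}$ with $(a_1,\ldots,a_n)\in\bbF_q^n\setminus 0$, and the point lies in such a hyperplane precisely when $1,x_1,\ldots,x_n$ satisfy a nontrivial $\bbF_q$-linear relation. The base case $n=1$ then reduces to $\Omega^1=\bbA^1\setminus\bbA^1(\bbF_q)$, giving $[\Omega^1]=\bbL-q$.

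For the inductive step, I would consider the projection $\pi\colon\bbA^n\to\bbA^{n-1}$ onto the first $n-1$ coordinates. Given a point $(x_1,\ldots,x_{n-1})\in\Omega^{n-1}$, so that $1,x_1,\ldots,x_{n-1}$ are $\bbF_q$-linearly independent, the fiber of $\Omega^n\to\Omega^{n-1}$ over it consists of those $x_n\in\bbA^1$ such that $x_n$ does not lie in the $\bbF_q$-span of $1,x_1,\ldots,x_{n-1}$, a set of $q^n$ forbidden values. Globally, inside $\pi^{-1}(\Omega^{n-1})=\Omega^{n-1}\times\bbA^1$, the forbidden locus is the union, over $c=(c_0,\ldots,c_{n-1})\in\bbF_q^n$, of the graphs
$$V_c = \bigl\{(x_1,\ldots,x_n)\in\Omega^{n-1}\times\bbA^1 : x_n = c_0+c_1 x_1+\cdots+c_{n-1}x_{n-1}\bigr\},$$
each isomorphic to $\Omega^{n-1}$ via $\pi$.

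The key observation is that the graphs $V_c$ are pairwise disjoint over $\Omega^{n-1}$: if $c\ne c'$ produced the same $x_n$ over a point of $\Omega^{n-1}$, then $\sum (c_i-c_i')x_i+(c_0-c_0')=0$, contradicting $\bbF_q$-linear independence of $1,x_1,\ldots,x_{n-1}$. This disjointness is the one step that genuinely uses the defining property of $\Omega^{n-1}$, and it is where I would be most careful. Applying the scissors relation to the stratification $\Omega^{n-1}\times\bbA^1 = \Omega^n \sqcup \bigsqcup_{c\in\bbF_q^n} V_c$ yields
$$[\Omega^n] = [\Omega^{n-1}]\cdot\bbL - q^n[\Omega^{n-1}] = [\Omega^{n-1}](\bbL-q^n),$$
and the desired product formula follows by induction.
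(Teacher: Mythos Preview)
Your proof is correct, but it follows a genuinely different route from the paper's. You argue by induction via the projection $\bbA^n\to\bbA^{n-1}$, establishing the recurrence $[\Omega^n]=[\Omega^{n-1}](\bbL-q^n)$ directly from a scissors decomposition of $\Omega^{n-1}\times\bbA^1$ into $\Omega^n$ and $q^n$ disjoint graphs. The paper instead stratifies $\bbA^n$ by all its $\bbF_q$-rational affine-linear subspaces to see that $[\Omega^n]=P_n(\bbL)$ for some monic integer polynomial $P_n$ of degree $n$, and then identifies $P_n$ by applying the counting measures $\mu_{\bbF_{q^d}}$ for $d=1,\ldots,n$: since $\Omega^n(\bbF_{q^d})=\emptyset$ for $d\le n$, each $q^d$ is a root, forcing $P_n(x)=\prod_{d=1}^n(x-q^d)$. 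Your argument is more self-contained and yields the explicit recurrence without ever invoking point counts; the paper's argument avoids the fibration bookkeeping and instead leans on the pleasant trick that a monic degree-$n$ polynomial is determined by $n$ roots, using the very counting measures that are the paper's main theme.
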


\begin{proof}
For any $\bbF_q$-rational affine-linear subspace $A$ of $\bbA^n$, let $A^\circ$ denote the open subvariety of $A$ which is the complement of all proper $\bbF_q$-rational affine-linear subspaces of $A$.  Then $[A^\circ] = [\Omega^{\dim A}]$, and one can write recursively
$$[\Omega^n] = \bbL^n - \sum_{i=1}^{n-1} a_{n,i}[\Omega^i],$$
where $a_{n,i}$ is the number of $\bbF_q$-rational $i$-dimensional affine linear subspaces of
$\bbA^n$.  Thus, $[\Omega^n]$ can be expressed as $P_n(\bbL)$, where $P_n\in\bbZ[x]$ is monic and of degree $n$.  It suffices to prove that $q^d$ is a root of $P_n(x)$ for all integers $d\in\{1,2,\ldots,n\}$.

For any $d$ in this range $\Omega^n(\bbF_{q^d})$ is empty.  Indeed, if $x\in \bbA^n(\bbF_{q^d})$, then the $n$ coordinates of $x$ together with $1$ cannot be linearly independent over $\bbF_q$, which implies that $x$ lies in a proper $\bbF_q$-rational affine-linear subspace of $\bbA^n$.  Thus,
$$0 = \mu_{\bbF_{q^d}}(\Omega^n) = P_n(q^d).$$
\end{proof}

\begin{cor}
If $\mu$ is a positive measure on $\Groth {\bbF_q}$, there exists a positive integer $n$ such that
$\mu(\bbL) = q^n$.
\end{cor}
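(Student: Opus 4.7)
The plan is to use the previous proposition's computation $[\Omega^n] = (\bbL - q)(\bbL - q^2)\cdots(\bbL - q^n)$ combined with positivity of $\mu$. Write $x := \mu(\bbL)\in\bbR$. Since $\mu$ is a ring homomorphism, applying $\mu$ to the proposition gives
$$\mu([\Omega^n]) = (x-q)(x-q^2)\cdots(x-q^n),$$
and positivity forces this product to be nonnegative for every $n\ge 1$. The goal is to show that these inequalities together pin $x$ down to be one of the numbers $q, q^2, q^3, \ldots$.

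The key step is a short induction on $n$ establishing the following dichotomy: for each $n\ge 1$, either $x \in \{q,q^2,\ldots,q^n\}$, or $x > q^n$. The base case $n=1$ is immediate from $x - q \ge 0$. For the inductive step, assuming $x > q^n$, every factor in the product $(x-q)(x-q^2)\cdots(x-q^n)$ is strictly positive, so nonnegativity of $(x-q)\cdots(x-q^{n+1})$ forces $x - q^{n+1} \ge 0$, yielding $x = q^{n+1}$ or $x > q^{n+1}$.

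Finally, since $q \ge 2$ we have $q^n \to \infty$, so for $n$ large enough $q^n$ exceeds the fixed real number $x$; the second alternative of the dichotomy is then impossible, and we conclude $x = q^k$ for some $k \in \{1,\ldots,n\}$. This $k$ is the positive integer required.

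There is no real obstacle here: the content of the argument is entirely contained in the formula for $[\Omega^n]$ established in the preceding proposition, and the only thing to watch is the simple induction that rules out $x$ lying strictly between consecutive powers of $q$. The finiteness of $x$ as a real number, together with the unboundedness of the sequence $q^n$, is what converts infinitely many inequalities into the discrete conclusion $\mu(\bbL)\in\{q,q^2,q^3,\ldots\}$.
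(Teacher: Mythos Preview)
Your argument is correct and is essentially the same as the paper's: both exploit the formula $[\Omega^n]=(\bbL-q)\cdots(\bbL-q^n)$ and positivity to rule out $\mu(\bbL)$ lying strictly between consecutive powers of $q$. The paper compresses this into one line (if $q^{n-1}<\mu(\bbL)<q^n$ then $\mu(\Omega^n)<0$), whereas you unwind the same observation as an explicit induction; your version has the minor virtue of making the base case $x\ge q$ explicit.
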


\begin{proof}
If $q^{n-1} < \mu(\bbL) < q^n$ for some integer $n$, then $\mu(\Omega^n) < 0$, contrary to positivity.
\end{proof}

Our goal is then to prove that $\mu(\bbL) = q^n$ implies $\mu = \mu_{\bbF_{q^n}}$.  We prove first that
these measures coincide for varieties of the form $\Spec \bbF_{q^d}$ and deduce that they coincide for all affine varieties.  As $\Groth {\bbF_q}$ is generated by the classes of affine varieties, this implies the theorem.

\begin{lem}
\label{fiber-product}
Let $\mu$ be a real-valued motivic measure of $\Groth {\bbF_q}$ and $m$ a positive integer.  Then 
$$\mu(\Spec \bbF_{q^m})\in\{0,m\}.$$
Moreover, if $\Spec\bbF_{q^m}$ has measure $m$ , then $\Spec\bbF_{q^d}$ has measure $d$ whenever $d$ divides $m$.
\end{lem}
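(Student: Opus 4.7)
The plan is to exploit the multiplicativity of $\mu$ by computing the classes of fiber products of the zero-dimensional varieties $\Spec\bbF_{q^m}$ with themselves and with $\Spec\bbF_{q^d}$. The key algebraic input is the standard decomposition of tensor products of finite fields: for any positive integers $a,b$,
$$\bbF_{q^a}\otimes_{\bbF_q}\bbF_{q^b}\;\cong\;\bbF_{q^{\lcm(a,b)}}^{\,\gcd(a,b)}$$
as $\bbF_q$-algebras, since this tensor product is \'etale and its spectrum has $\gcd(a,b)$ geometric points, each with residue field equal to the compositum $\bbF_{q^{\lcm(a,b)}}$.

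For the first assertion, I would specialize to $a=b=m$ to obtain the identity
$$[\Spec\bbF_{q^m}]\cdot[\Spec\bbF_{q^m}] \;=\; m\cdot[\Spec\bbF_{q^m}]$$
in $\Groth{\bbF_q}$, since $\lcm(m,m)=\gcd(m,m)=m$. Applying the ring homomorphism $\mu$ gives
$$\mu(\Spec\bbF_{q^m})^2 = m\cdot\mu(\Spec\bbF_{q^m}),$$
so $\mu(\Spec\bbF_{q^m})\in\{0,m\}$.

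For the second assertion, assume $d\mid m$ and take $a=m$, $b=d$, so that $\lcm(m,d)=m$ and $\gcd(m,d)=d$. The tensor product decomposition yields
$$[\Spec\bbF_{q^m}]\cdot[\Spec\bbF_{q^d}] \;=\; d\cdot[\Spec\bbF_{q^m}],$$
and applying $\mu$ produces the scalar equation
$$\mu(\Spec\bbF_{q^m})\cdot\mu(\Spec\bbF_{q^d}) \;=\; d\cdot\mu(\Spec\bbF_{q^m}).$$
Under the hypothesis that $\mu(\Spec\bbF_{q^m})=m$, this is a nonzero real number and can be cancelled to deduce $\mu(\Spec\bbF_{q^d})=d$.

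There is no real obstacle here; the argument is purely formal once one recognizes the tensor product identity. The only point worth noting is that positivity of $\mu$ plays no role, so the lemma holds for arbitrary real-valued motivic measures, which is presumably what is needed later when these identities are combined with the corollary to force $\mu(\bbL)=q^n$.
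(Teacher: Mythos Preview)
Your argument is correct and is essentially identical to the paper's: both compute the tensor products $\bbF_{q^m}\otimes_{\bbF_q}\bbF_{q^m}\cong\bbF_{q^m}^{\,m}$ and $\bbF_{q^d}\otimes_{\bbF_q}\bbF_{q^m}\cong\bbF_{q^m}^{\,d}$ (for $d\mid m$), apply the ring homomorphism $\mu$, and solve the resulting scalar equations. Your remark that positivity is irrelevant here is also exactly right and matches the lemma's hypotheses.
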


\begin{proof}
As
$$\bbF_{q^m}\otimes_{\bbF_q} \bbF_{q^m} = \bbF_{q^m}^m,$$
the class of $\Spec \bbF_{q^m}$ satisfies $x^2 = mx$.
If $d$ divides $m$,
$$\bbF_{q^d}\otimes _{\bbF_q} \bbF_{q^m} = \bbF_{q^m}^d,$$
so $\mu(\Spec \bbF_{q^m})=m$ implies $\mu(\Spec \bbF_{q^d})=d$.
\end{proof}

Of course, 
$$\mu_{\bbF_n}(\Spec \bbF_{q^m}) = 
\begin{cases}
m&\text{if $m|n$}\\
0&\text{otherwise.}
\end{cases}$$

We would like to prove the same thing for the values of $\mu(\Spec\bbF_{q^m})$.   We begin with the following proposition.

\begin{prop}
If $\mu(\bbL)=q^n$ and $\mu(\Spec(\bbF_{q^k})) = k$ for some $k\ge n$, then
\begin{equation}
\label{zero-dim}
\mu(\Spec \bbF_{q^m}) = 
\begin{cases}
m&\text{if $m|n$}\\
0&\text{otherwise.}
\end{cases}
\end{equation}
\end{prop}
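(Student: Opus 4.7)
The plan is to describe the set $S := \{m \geq 1 : \mu(\Spec \bbF_{q^m}) = m\}$ and prove it is precisely the set of divisors of $n$. Lemma~\ref{fiber-product} immediately supplies two facts I would use: for every $m$, $\mu(\Spec \bbF_{q^m}) \in \{0,m\}$, and $S$ is closed under taking divisors. To these I would add a third closure property: $S$ is closed under least common multiples. This follows from applying the ring homomorphism $\mu$ to the identity
\[ [\Spec \bbF_{q^a}] \cdot [\Spec \bbF_{q^b}] = \gcd(a,b) \cdot [\Spec \bbF_{q^{\mathrm{lcm}(a,b)}}], \]
which comes from $\bbF_{q^a} \otimes_{\bbF_q} \bbF_{q^b} \cong \bbF_{q^{\mathrm{lcm}(a,b)}}^{\gcd(a,b)}$: if $a,b \in S$, this forces $\mu(\Spec \bbF_{q^{\mathrm{lcm}(a,b)}}) = \mathrm{lcm}(a,b)$.

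The key geometric ingredient is the closed subscheme $Z_j \subset \bbA^1_{\bbF_q}$ cut out by $x^{q^j} - x$. Since $x^{q^j} - x$ factors over $\bbF_q$ as the product of all monic irreducibles whose degree divides $j$, one has a decomposition
\[ [Z_j] = \sum_{d \mid j} N_d \, [\Spec \bbF_{q^d}] \]
in $\Groth{\bbF_q}$, where $N_d$ is the number of monic degree-$d$ irreducibles, so that $\sum_{d \mid j} d N_d = q^j$. For any $j \in S$, divisor-closure of $S$ gives $\mu(\Spec \bbF_{q^d}) = d$ for all $d \mid j$, hence $\mu(Z_j) = \sum_{d \mid j} d N_d = q^j$. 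Positivity applied to the open complement then yields
\[ 0 \leq \mu(\bbA^1 \setminus Z_j) = q^n - q^j, \]
so $j \leq n$ for every $j \in S$.

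To conclude: by hypothesis $k \in S$, so $k \leq n$; combined with $k \geq n$ this forces $k = n$, hence $n \in S$, and by divisor-closure every $d \mid n$ lies in $S$. Conversely, for any $m \in S$, lcm-closure together with $n \in S$ gives $\mathrm{lcm}(m,n) \in S$, so $\mathrm{lcm}(m,n) \leq n$, which forces $m \mid n$. Thus $S = \{d : d \mid n\}$, and together with $\mu(\Spec \bbF_{q^m}) \in \{0,m\}$ this is exactly the stated formula.

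The main obstacle I expect is spotting that $Z_j$ is the correct test variety: it packages all closed points of $\bbA^1$ whose degree divides $j$ into a single closed subscheme whose $\mu$-value is determined by the numbers $\mu(\Spec \bbF_{q^d})$ for $d \mid j$, and its open complement has the small measure $q^n - q^j$. Everything else in the argument is bookkeeping with the two closure properties of $S$.
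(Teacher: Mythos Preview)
Your argument is correct and shares the same geometric core as the paper's: both apply positivity to the open complement in $\bbA^1$ of all closed points of degree dividing $k$ (your $\bbA^1\setminus Z_k$, the paper's $X_k$) to force $q^n-q^k\ge 0$, hence $k=n$, and then divisor-closure gives $\mu(\Spec\bbF_{q^d})=d$ for $d\mid n$.

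The genuine difference is in how the case $m\nmid n$ is handled. The paper stays with positivity: it refines $X_k$ to $Y_{k,m}=X_k\setminus\{\text{points of degree }m\}$, computes $\mu(Y_{k,m})=q^n-q^k-c_m\,\mu(\Spec\bbF_{q^m})$, and from $\mu(Y_{k,m})\ge 0$ (together with $k=n$ and $c_m>0$) reads off $\mu(\Spec\bbF_{q^m})=0$ directly. You instead introduce an extra algebraic closure property---lcm-closure of $S$, from $\bbF_{q^a}\otimes_{\bbF_q}\bbF_{q^b}\cong\bbF_{q^{\mathrm{lcm}(a,b)}}^{\gcd(a,b)}$---and combine it with the bound $j\le n$ for all $j\in S$ to conclude that $m\in S$ forces $\mathrm{lcm}(m,n)\le n$, i.e.\ $m\mid n$. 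Your route trades the second application of positivity for a clean structural fact about $S$; the paper's route avoids proving lcm-closure but needs the slightly sharper test variety $Y_{k,m}$. Both are short and equally effective here.
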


For any integer $k$, we denote by $X_k$ the complement in $\bbA^1$ of the set of all points 
with residue field contained in $\bbF_{q^k}$.

\begin{proof}
By Lemma~\ref{fiber-product}, $\mu(\Spec \bbF_{q^d}) = d$ when $d$ divides $k$.
Choose an $m$ not dividing $k$, and let $Y_{k,m}$ denote the complement in $X_k$ of the set of points with residue field 
$\bbF_{q^m}$.
Then
$$\mu([Y_{k,m}]) = \mu(\bbL) - \sum_{d\mid k} c_d d - c_m \mu(\Spec\bbF_{q^m}),$$
where $c_i$ is the number of points in $\bbA^1$ with residue field $\bbF_{q^i}$.  From the positivity of $\mu([Y_{k,m}])$ and the fact that
$$0 = \mu_{\bbF_{q^k}}([Y_{k,m}]) = q^k - \sum_{d\mid k} c_d d$$
we see that $\mu(\bbL) - q^k = q^n - q^k$ must be nonnegative, which is to say $k=n$, and that $\mu(\Spec\bbF_{q^m}) = 0$.


%
%
%
%
%
%
%
%
\end{proof}

\begin{prop}
If $\mu(\bbL) = q^n$, then $\mu(\Spec \bbF_{q^n}) = n$.
\end{prop}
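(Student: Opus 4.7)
\medskip

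The plan is to reduce to the hypothesis of the preceding proposition. By Lemma~\ref{fiber-product}, each value $\mu(\Spec\bbF_{q^m})$ lies in $\{0,m\}$; set $M=\{m\ge 1:\mu(\Spec\bbF_{q^m})=m\}$. The lemma already gives divisor-closure of $M$. I would also observe that $M$ is closed under least common multiples: the isomorphism $\bbF_{q^a}\otimes_{\bbF_q}\bbF_{q^b}\cong\bbF_{q^{\mathrm{lcm}(a,b)}}^{\gcd(a,b)}$ yields $[\Spec\bbF_{q^a}]\cdot[\Spec\bbF_{q^b}]=\gcd(a,b)\,[\Spec\bbF_{q^{\mathrm{lcm}(a,b)}}]$, so if $a,b\in M$ then $ab=\gcd(a,b)\,\mu(\Spec\bbF_{q^{\mathrm{lcm}(a,b)}})$, forcing $\mu(\Spec\bbF_{q^{\mathrm{lcm}(a,b)}})=\mathrm{lcm}(a,b)$. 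Together with divisor-closure this shows $M=\{d:d\mid N\}$ for some $N\in\bbN$. The proposition is equivalent to the claim $n\in M$; by the previous proposition, it suffices to exhibit any single $k\ge n$ with $k\in M$.

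Suppose for contradiction that $M\cap\{k\ge n\}=\emptyset$, so $N\le n-1$. An independent constraint comes from $\Omega^{k-1}$ for $k\ge n+1$: its class $\prod_{i=1}^{k-1}(\bbL-q^i)$ has $\mu(\Omega^{k-1})=\prod_{i=1}^{k-1}(q^n-q^i)=0$ because the factor $q^n-q^n$ appears. The closed subscheme $Z_k\subseteq\Omega^{k-1}$ consisting of all closed points with residue field $\bbF_{q^k}$ is nonempty: since $\Omega^{k-1}$ has no $\bbF_{q^d}$-points for $d\le k-1$, every $\bbF_{q^k}$-point lies in a Galois orbit of size exactly $k$, so $c_k:=\prod_{i=1}^{k-1}(q^k-q^i)/k>0$ and $[Z_k]=c_k[\Spec\bbF_{q^k}]$. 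Positivity gives $0\ge\mu(Z_k)=c_k\mu(\Spec\bbF_{q^k})$, forcing $\mu(\Spec\bbF_{q^k})=0$ for every $k\ge n+1$ (which is consistent with $N<n$ but adds no new obstruction).

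The crux is to rule out $N<n$. A natural candidate variety is $V=\Omega^{n-1}\setminus Z_n$, where $Z_n$ is the closed subvariety of closed points of residue field $\bbF_{q^n}$ in $\Omega^{n-1}$; under the assumption, $\mu(V)=\mu(\Omega^{n-1})=\prod_{i=1}^{n-1}(q^n-q^i)>0$, yet $V$ has no $\bbF_{q^d}$-points for $d\le n$ and all its closed points have $\mu$-value zero. Following the template of the previous proposition, which combined $X_k$ with a single extra removed orbit to extract the sharp inequality, I would try to combine $V$ (or the similar $X_k$ with $k$ a multiple of $N$) with a product or base change involving $\Spec\bbF_{q^n}$---using the idempotent-like relation $[\Spec\bbF_{q^n}]^2=n[\Spec\bbF_{q^n}]$---to produce a class whose $\mu$-value, forced on one side by polynomial evaluation at $\bbL=q^n$ and on the other by the relations $\mu(\Spec\bbF_{q^d})=d\cdot[d\mid N]$, is strictly negative.

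The main obstacle is pinpointing this last variety: positivity on $V$ alone and on straightforward products of $V$ with 0-dimensional pieces of class $[\Spec\bbF_{q^d}]$ yields $\mu\ge 0$ in every case I have examined, so the contradiction must be squeezed from a more delicate construction in which the discrepancy between $\mu(\bbL)=q^n$ and the "counting measure on 0-dimensional varieties" profile $d\mapsto d\cdot[d\mid N]$ (with $N<n$) is witnessed by a single explicit class.
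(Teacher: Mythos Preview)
Your setup is correct and clean: the divisor- and lcm-closure of $M=\{m:\mu(\Spec\bbF_{q^m})=m\}$ is valid, and reducing to ``find some $k\ge n$ in $M$'' via the preceding proposition is exactly the right reformulation.  But the proof is genuinely incomplete: you explicitly say you have not found the variety that produces the contradiction, and the candidates you list ($\Omega^{k-1}$, $V=\Omega^{n-1}\setminus Z_n$, products with zero-dimensional pieces) indeed all give $\mu\ge 0$.  Searching for a \emph{single} natural variety whose class, under the hypothesis $N<n$, evaluates to a negative number is the wrong shape of argument here.

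The paper's idea is a packing argument rather than a single clever class.  From the assumption $\mu(\Spec\bbF_{q^i})=0$ for all $i\ge n$ one gets, for every $k$, the uniform lower bound
\[
\mu([X_k])\;\ge\; q^n-q-\sum_{i=2}^{n-1}(q^i-1)\;\ge\;2,
\]
since only the residue fields $\bbF_{q^i}$ with $i<n$ can contribute.  Now place more than $q^{2n}$ pairwise disjoint copies of $X_{(2n)!}$ inside $\bbA^2$: take the graphs $y=P(x)$ for all $P\in\bbF_q[x]$ of degree $\le 2n$, and delete from each the points whose $x$-coordinate lies in $\bbF_{q^{(2n)!}}$.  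Two such open curves meet only where the $x$-coordinate satisfies a nonzero polynomial of degree $\le 2n$, and all such points have been removed, so the open curves are disjoint and each is isomorphic to $X_{(2n)!}$.  Their union $C$ then satisfies $\mu([C])>q^{2n}\cdot\mu([X_{(2n)!}])>q^{2n}=\mu([\bbA^2])$, whence $\mu([\bbA^2\setminus C])<0$, the desired contradiction.  The missing insight is thus: do not look for one variety with negative measure, but rather exploit the \emph{uniform positive mass} of the $X_k$ to overfill a container of known measure.
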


\begin{proof}

The assertion is clear for $n=1$, so we assume $n > 1$.  Let $c_i$  denote the number of points in $\bbA^1$ with residue field $\bbF_{q^i}$. Thus $ic_i \le q^i - 1$ for all $i > 1$.  If $\mu(\Spec \bbF_{q^n}) = 0$, then
$\mu(\Spec(\bbF_{q^i})) = 0$ for all $i\ge n$, so for all $k > 0$ we have 
$$\mu([X_k]) \ge q^n - q - \sum_{i=2}^{n-1} (q^i - 1) \ge 2.$$
Now we consider all curves in $\bbA^2$ of the form $y = P(x)$ where $P(x)\in \bbF_q[x]$
has degree $\le 2n$.  The total number of such curves is greater than $q^{2n}$, and for any intersection
point $(\alpha,\beta)$ of any two distinct curves of this family, $\alpha$ satisfies a polynomial equation of degree $\le 2n$ over $\bbF_q$.  Therefore, the open curves
$$C_P := \{(x,P(x))\mid x\not\in \bbF_{q^{(2n)!}}\},$$
indexed by polynomials $P$ of degree $\le 2n$, each isomorphic to $X_{(2n)!}$, are mutually disjoint.  If
$C$ denotes the closure of the union of the $C_P$ in $\bbA^2$, it follows that
$$\mu([C]) > q^{2n}\mu([X_{(2n)!}]) > q^{2n},$$
so $\mu([\bbA^2\setminus C]) < 0$, which is absurd.
\end{proof}

Together, the two preceding propositions imply equation
(\ref{zero-dim}).

We can now prove Theorem~\ref{main}.  We assume $\mu(\bbL)=q^n$.  It suffices to check that $\mu([V]) = |V(\bbF_{q^n})|$ for all affine $\bbF_q$-varieties $V$.

Each closed point of $V$ with residue field $\bbF_{q^d}$ corresponds to a $d$-element Galois orbit in $V(\bbF_{q^d})$.  If $d$ divides $n$, it gives a $d$-element subset of $V(\bbF_{q^n})$ and the subsets arising from different closed points are mutually disjoint. Since $V(\bbF_{q^n})$ is the union of all these subsets, and  $\mu(\Spec\bbF_{q^d}) = d$, we have
\begin{equation}
\label{V-ineq}
\mu([V]) \ge |V(\bbF_{q^n})|
\end{equation}
for each $\bbF_q$-variety $V$.  However, embedding $V$ as a closed subvariety of $\bbA^m$ for some $m$, the complement $W = \bbA^m\setminus V$ is again a variety, so
\begin{equation}
\label{W-ineq}
\mu([W]) \ge |W(\bbF_{q^n})|.
\end{equation}
As
$$q^{mn} = \mu([\bbA^m]) = \mu([V]) + \mu([W]) \ge |V(\bbF_{q^n})|+|W(\bbF_{q^n})| = 
|\bbA^m(\bbF_{q^n})| = q^{mn},$$
we must have equality in (\ref{V-ineq}) and (\ref{W-ineq}).

\end{document}